\newtheorem{theorem}{Theorem}[section]
\newtheorem{prop}[theorem]{Proposition}
\newtheorem{lemma}[theorem]{Lemma}
\newtheorem{definition}[theorem]{Definition}
\newtheorem{conj}[theorem]{Conjecture}
\definecolor{Rouge}{RGB}{205,35,38}
\definecolor{Bleu}{RGB}{2,60,195}
\definecolor{vertex}{rgb}{0, 0.5, 0}
\renewcommand{\leq}{\leqslant}
\renewcommand{\geq}{\geqslant}
\author{Philippe Biane\addressmark{1},  Hayat Cheballah\addressmark{1}}
\title[Left  trapezoids]{Gog, Magog and Schützenberger II: left trapezoids}
\address{\addressmark{1}Institut Gaspard-Monge, université Paris-Est Marne-la-Vallée,
5 Boulevard Descartes, Champs-sur-Marne, 77454, Marne-la-Vallée cedex 2,
France}
\keywords{Gog, Magog triangles and trapezoids, Schützenberger Involution,
alternating sign matrices, totally symmetric self complementary 
plane partitions}
\begin{document}
\maketitle
\begin{abstract}
\paragraph{Abstract.}
We are interested in finding an explicit bijection between two families of 
combinatorial objects: Gog and Magog triangles. These two families are particular 
classes of  Gelfand-Tsetlin triangles and are respectively in bijection with alternating
sign matrices  (ASM) and totally symmetric self complementary plane partitions (TSSCPP). 
For this purpose, we introduce left Gog and GOGAm trapezoids. We conjecture that these 
two families of trapezoids are equienumerated and we give an explicit bijection between 
the trapezoids with one or two diagonals.

\paragraph{Résumé.}
Nous nous intéressons ici  à trouver une bijection explicite entre deux familles 
d'objets combinatoires: les triangles Gog et Magog. Ces deux familles d'objets sont 
des classes particulières des triangles de Gelfand-Tsetlin et sont respectivement en 
bijection avec les matrices à signes alternants (ASMs) et les partitions planes 
totalement symétriques auto-complémentaires (TSSCPPs). Pour ce faire, nous 
introduisons les Gog et les GOGAm trapèzes gauches. Nous conjecturons que ces deux 
familles de trapèzes sont équipotents et nous donnons une bijection explicite entre 
ces trapèzes à une et deux lignes.
\end{abstract}

\section{Introduction}
\label{sec::intro}
This paper is a sequel to~\cite{BC}, to which we refer for more on the background 
of the Gog-Magog problem (see also~\cite{bressoud} and~\cite{Cheb} for a thorough 
discussion).
It is  a well known open problem in bijective combinatorics  to find a bijection 
between alternating sign matrices and totally symmetric self complementary plane 
partitions. One can reformulate the problem using so-called Gog and Magog triangles, 
which are particular species of Gelfand-Tsetlin triangles. In particular, Gog 
triangles are in simple bijection with alternating sign matrices of the same
size, while Magog triangles are in bijection with totally symmetric 
self complementary plane partitions. In~\cite{mrr},  Mills, Robbins and Rumsey 
introduced trapezoids in this problem by cutting out~$k$ diagonals on the right 
(with the conventions used in the present paper) of a triangle of size~$n$, 
and conjectured that Gog and Magog trapezoids of the same size are equienumerated. 
Zeilberger~\cite{zeilberger} proved this conjecture, but no explicit bijection is known, 
except for~$k=1$ (which is a relatively easy problem) and for~$k=2$, this bijection 
being the main result of~\cite{BC}. In this last paper a new class of triangles and 
trapezoids was introduced, called GOGAm triangles (or trapezoids), which are in 
bijection with the Magog triangles by the Sch\"utzenberger involution acting on 
Gelfand-Tsetlin triangles. 
\smallskip

In this paper we introduce a new class of trapezoids by cutting diagonals of Gog 
and GOGAm triangles on the left instead of the right. We conjecture that the left 
Gog and GOGAm trapezoids of the same shape are equienumerated, and give a bijective 
proof of this for trapezoids composed of one or two diagonals. Furthermore we show 
that our bijection is compatible with the previous bijection between right trapezoids. 
It turns out that the bijection we obtain for left trapezoids is much simpler than the 
one of~\cite{BC} for right trapezoids.Finally we can also consider rectangles 
(intersections of left and right trapezoids). For such rectangles we also conjecture that  
Gog and GOGAm are equienumerated.
\smallskip

Our results are presented in this paper as follows. In section~\ref{sec::basic::def}
we give some elementary definitions about  Gelfand-Tsetlin triangles, Gog and GOGAm triangles,
and then define left and right Gog and GOGAm trapezoids and describe their minimal completion. 
Section~\ref{sec::results::conjecture} is devoted to the formulation of a 
conjecture on the existence of a bijection between Gog and GOGAm trapezoids of the 
same size. We end this paper by section \ref{sec::bijection::gog::gogam::left::trap} 
where we give a bijection between $(n,2)$ left Gog and GOGAm trapezoids and we show 
how its work on an example. Finally, we consider another combinatorial object; rectangles.


\section{Basic definitions}
\label{sec::basic::def}
We start by giving definitions of our main objects of study. We refer to~\cite{BC}
for more details.
\subsection{Gelfand-Tsetlin triangles}
\begin{definition}
A  Gelfand-Tsetlin  triangle of size $n$ is a triangular array 
$X=(X_{i,j})_{n\geqslant i\geqslant j\geqslant 1}$ of positive integers

\begin{equation}
 \begin{array}{ccccccccccccc}
   X_{n,1} &   & X_{n,2}&    &\cdots &        & \cdots        &         &X_{n,n-1}    &      &X_{n,n}\\
    &X_{n-1,1}     &    &X_{n-1,2}     &        &  \cdots  &         &  \iddots  &       &X_{n-1,n-1}     &  \\
    
    &   &\ddots&    & &        &  \iddots   &         &\iddots &  &\\
    &   &    &\ddots &        &\iddots &            & \iddots &        &&&  &   \\
    &   &    &    &X_{2,1}  &        & X_{2,2}   &         &        &&  &&  \\
    &   &    &    &        &X_{1,1}  &            &         &        &    &&  &  
 \end{array}
\end{equation}

 such that
 \begin{equation}
    X_{i+1,j}\leqslant  X_{i,j}\leqslant X_{i+1,j+1}\quad \text{ for } n-1\geqslant i\geqslant j\geqslant 1.
 \end{equation}
\end{definition}

The set of all Gelfand-Tsetlin triangles of size~$n$ is a poset for the order such 
that~$X\leq Y$ if and only if~$X_{ij}\leq Y_{ij}$ for all $i,j$. It is also a lattice 
for this order, the infimum and supremum being taken entrywise:~$\max(X,Y)_{ij}=\max(X_{ij},Y_{ij})$.
\subsection{Gog triangles and trapezoids}
\label{sec.gog}

\begin{definition} A Gog triangle of size~$n$ is a Gelfand-Tsetlin  triangle 
such that
\begin{enumerate}
 \item its rows are strictly increasing;
  \begin{equation}
        X_{i,j}<X_{i,j+1}, \qquad j<i\leqslant n-1
  \end{equation}
 \item and such that 
  \begin{equation}
        X_{n,j}=j,\qquad 1\leqslant j\leqslant n .
  \end{equation}
\end{enumerate}
\end{definition}

Here is an example of Gog triangle of size~$n=5$.
\begin{equation}
    \begin{split}
    \begin{tikzpicture}[scale=.6]
    \draw  (-1,5) node{$1$} ;\draw(1,5) node{$2$} ;\draw(3,5) node{$3$} ;\draw(5,5) node{$4$} ;\draw(7,5) node{$5$} ;        
	    \draw  (0,4) node{1} ;\draw  (2,4) node{3} ;\draw(4,4) node{4} ;\draw(6,4) node{5} ;
		    \draw  (1,3) node{1} ;\draw(3,3) node{4} ;\draw(5,3) node{5} ;
			    \draw  (2,2) node{2} ;\draw(4,2) node{4} ;
				    \draw(3,1) node{3} ;
    \end{tikzpicture}
    \end{split}
\end{equation}

It is immediate to check that the set of Gog triangles of size~$n$
is a sublattice of the Gelfand-Tsetlin triangles.

\begin{definition}
A~$(n,k)$  right Gog trapezoid   (for $k\leq n$) is an array of positive 
integers~$X=(X_{i,j})_{n\geqslant i\geqslant j\geqslant 1; i-j\leq k-1}$ formed 
from the~$k$ rightmost SW-NE diagonals of some Gog triangle of size $n$.
\end{definition}
Below is a $(5,2)$ right Gog trapezoid.

\begin{equation}
    \begin{split}
    \begin{tikzpicture}[scale=.6]
\draw(6,4) node{4} ;\draw(8,4) node{5} ;
\draw(5,3) node{4} ;\draw(7,3) node{5} ;
\draw(4,2) node{3} ;\draw(6,2) node{4} ;
\draw(3,1) node{1} ;\draw(5,1) node{3} ;
\draw(4,0) node{2} ;
    \end{tikzpicture}
    \end{split}
\end{equation}

\begin{definition}
A~$(n,k)$ left Gog  trapezoid  (for~$k\leq n$) is an array of positive 
integers~$X=(X_{i,j})_{n\geqslant i\geqslant j\geqslant 1; k\geqslant j}$  
formed from the~$k$ leftmost NW-SE diagonals of a Gog triangle of size~$n$.
\end{definition}
A more direct way of checking that a left Gelfand-Tsetlin 
trapezoid is a left Gog trapezoid is to verify that its 
rows are strictly increasing and that its SW-NE diagonals 
are bounded by~$1,2,\ldots,n$ as it is shown in the figure below
which represents a~$(5,2)$ left Gog trapezoid.
\smallskip

\begin{equation}\label{eq::52::left::gog::trapezoid}
    \begin{split}
    \begin{tikzpicture}[scale=.6]
         \tikzstyle{inf}=[left,rotate=45,color=blue]
         \tikzstyle{lim}=[color=blue]
\draw  (0,4) node{1} ;\draw  (2,4) node{2} ;
\draw  (1,3) node{1} ;\draw(3,3) node{3} ;
\draw  (2,2) node{2} ;\draw(4,2) node{3} ;
\draw(3,1) node{2} ;\draw(5,1) node{4} ;
\draw(4,0) node{4} ;
\node[inf](d1) at (1,4.75) {$\leqslant$};\node[lim](ld1) at (1.5,5) {$1$};
\node[inf](d2) at (3,4.75) {$\leqslant$};\node[lim](ld2) at (3.5,5) {$2$};
\node[inf](d3) at (4,3.75) {$\leqslant$};\node[lim](ld3) at (4.5,4) {$3$};
\node[inf](d4) at (5,2.75) {$\leqslant$};\node[lim](ld4) at (5.5,3) {$4$};
\node[inf](d5) at (6,1.75) {$\leqslant$};\node[lim](ld5) at (6.5,2) {$5$};
    \end{tikzpicture}
    \end{split}
\end{equation}

There is a simple involution~$X\to\tilde X$ on Gog triangles given by 
\begin{equation}
\tilde X_{i,j}=n+1-X_{i,i+1-j}
\end{equation}
which exchanges left and right trapezoids of the same size.
This involution corresponds to a vertical symmetry of associated ASMs.

\subsubsection{Minimal completion}
Since the set of Gog triangles is a lattice, given a left (resp. a right) 
Gog trapezoid, there exists a smallest Gog triangle from which it 
can be extracted. We call this Gog triangle the {\sl canonical completion} of the 
left (resp. the right) Gog trapezoid. 
Their explicit value is computed in the next Proposition.
\begin{prop}$ $
  \begin{enumerate}
    \item Let~$X$ be a~$(n,k)$ right Gog trapezoid, then its canonical completion satisfies
	\begin{equation}
	      X_{ij}=j\quad \mbox{for} \quad i\geqslant j+k.
	\end{equation}
    \item Let~$X$ be a~$(n,k)$ left Gog trapezoid, then its canonical completion satisfies
	\begin{equation}
	      X_{i,j}=\max(X_{i,k}+j-k,X_{i-1,k}+j-k-1,\ldots, X_{i-j+k,k})\quad \mbox{for} \quad j\geqslant k.
	\end{equation}
  \end{enumerate}
\end{prop}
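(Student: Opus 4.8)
\medskip
\noindent\textbf{Proof proposal.}
The plan is to use the lattice structure recalled above. For either kind of trapezoid, the set of Gog triangles extending it is nonempty (by definition) and closed under entrywise infimum (the infimum of Gog triangles is a Gog triangle, and it still restricts to the given trapezoid), so the canonical completion is this infimum. It therefore suffices, in each case, to sandwich the infimum between a lower bound coming from the defining inequalities and an upper bound obtained by exhibiting one explicit completion whose entries in the completion region are exactly the claimed values.

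For part (1), I would first record the elementary fact that every entry of a Gog triangle satisfies $X_{ij}\geq j$, since $X_{ij}\geq X_{i+1,j}\geq\cdots\geq X_{n,j}=j$. Hence any Gog completion of an $(n,k)$ right Gog trapezoid, and in particular the canonical one, has $X_{ij}\geq j$ for $i\geq j+k$. For the reverse inequality, take an arbitrary Gog triangle $Y$ extending the trapezoid and let $Z$ agree with $Y$ on the trapezoid and satisfy $Z_{ij}=j$ for $i\geq j+k$; I claim $Z$ is again a Gog triangle. The region $\{i\geq j+k\}$ is stable under $(i,j)\mapsto(i+1,j)$ and under $(i,j)\mapsto(i+1,j+1)$, so every Gelfand--Tsetlin inequality internal to it reads $j\leq j\leq j+1$; the only triple $\{(i,j),(i+1,j),(i+1,j+1)\}$ straddling the two regions occurs when $j=i-k+1$, where one uses $Y_{i,i-k+1}\geq i-k+1$ together with the Gelfand--Tsetlin inequalities of $Y$; and the $i$-th row of $Z$ reads $1,2,\dots,i-k,Y_{i,i-k+1},\dots$, which is strictly increasing because $Y_{i,i-k+1}\geq i-k+1>i-k$. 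Since the canonical completion is $\leq Z$, it has $X_{ij}\leq j$ for $i\geq j+k$, and equality follows.

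For part (2), the lower bound comes from iterating two inequalities valid in every Gog triangle: the Gelfand--Tsetlin inequality $X_{i,j}\geq X_{i-1,j-1}$ and the strict increase of rows, which for integers gives $X_{i,j}\geq X_{i,j-1}+1$. Applying the first $t$ times and then the second $j-k-t$ times yields $X_{i,j}\geq X_{i-t,k}+(j-k-t)$ for each $0\leq t\leq j-k$, and taking the maximum over $t$ gives exactly the claimed right-hand side. For the matching upper bound I would define an array $Z$ by that maximum on the region $j>k$, keeping the trapezoid entries for $j\leq k$, and check that $Z$ is a Gog triangle extending the trapezoid. Strict increase of rows is immediate since passing from column $j$ to column $j+1$ adds $1$ to each term of the maximum; the inequality $Z_{i,j}\leq Z_{i+1,j+1}$ follows by the shift $t\mapsto t+1$ of the index; the inequality $Z_{i+1,j}\leq Z_{i,j}$ is the delicate one, because the maximum defining $Z_{i+1,j}$ carries the extra term $X_{i+1,k}+j-k$, which one controls by $X_{i+1,k}+j-k\leq X_{i,k}+j-k\leq Z_{i,j}$ using $X_{i+1,k}\leq X_{i,k}$; and the normalization $Z_{n,j}=j$ holds since the $t=0$ term equals $j$ while every term is at most $j$, using $X_{n-t,k}\leq X_{n,k+t}=k+t$ in the ambient triangle. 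One also has to treat the boundary column $j=k$, where a single trapezoid entry borders entries given by the formula, and keep track of the index ranges in the cases $j+1\leq k$, $j=k$, $j>k$. Since $Z$ is then a completion, the canonical completion is $\leq Z$, and together with the lower bound this gives equality.

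I expect the only genuine obstacle to be this last verification in part (2) that the explicit array $Z$ really is a Gog triangle, in particular the inequality $Z_{i+1,j}\leq Z_{i,j}$ and the careful handling of the column $j=k$; the rest is bookkeeping once the lattice-infimum reformulation and the two basic monotonicity inequalities are in place.
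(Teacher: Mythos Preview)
Your argument is correct. Your use of the lattice structure to reduce the problem to a sandwich between a universal lower bound and one explicit completion is exactly the idea the paper has in mind; part~(1) is what the paper dismisses as ``trivial'', and for part~(2) the paper's one-line ``easily proved by induction on $j-k$'' amounts to computing the canonical completion column by column via $X_{i,j}=\max(X_{i,j-1}+1,\,X_{i-1,j-1})$ and then unwinding the recursion, which is precisely your lower-bound chain read backwards and your verification that the resulting array $Z$ is a Gog triangle. So the two arguments differ only in packaging: you build $Z$ globally and verify the Gog axioms directly, while the paper would build it one NW--SE diagonal at a time. One small remark on your write-up: when you justify $Z_{n,j}=j$ by invoking $X_{n-t,k}\le X_{n,k+t}=k+t$ ``in the ambient triangle'', it is cleaner to phrase this as the intrinsic bound $X_{n-t,k}\le k+t$ on the SW--NE diagonals of a left Gog trapezoid (which is exactly the paper's direct characterization of such trapezoids), so that the verification does not appear to depend on a particular extension.
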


\begin{proof} The first case  (right trapezoids) is trivial, the formula for the 
second case (left trapezoids) is easily proved  by induction on $j-k$.
\end{proof}

\smallskip

For example, the completion of the $(5,2)$ left Gog trapezoid 
in~\eqref{eq::52::left::gog::trapezoid}
\begin{equation}
    \begin{split}
    \begin{tikzpicture}[scale=.6]
\tikzstyle{ntrap}=[color=red]
\draw  (0,4) node{1} ;\draw  (2,4) node{2} ;\draw[ntrap](4,4) node{3} ;
\draw[ntrap](6,4) node{4} ;\draw[ntrap](8,4) node{5} ;
\draw  (1,3) node{1} ;\draw(3,3) node{3} ;
\draw[ntrap](5,3) node{4} ;\draw[ntrap](7,3) node{5} ;
\draw  (2,2) node{2} ;\draw(4,2) node{3} ;
\draw[ntrap](6,2) node{4} ;
\draw(3,1) node{2} ;\draw(5,1) node{4} ;
\draw(4,0) node{4} ;
    \end{tikzpicture}
    \end{split}
\end{equation}
Remark that the supplementary entries of the canonical completion of a 
left Gog trapezoid depend only on its rightmost NW-SE diagonal. 
\smallskip

The right trapezoids defined above coincide (modulo easy reindexations) 
with those of Mills, Robbins, Rumsey~\cite{mrr}, and Zeilberger~\cite{zeilberger}. 
They are in obvious bijection with the ones in~\cite{BC} 
(actually the Gog trapezoids of~\cite{BC} are the canonical completions 
of the right Gog trapezoids defined above). 

\subsection{GOGAm triangles and trapezoids} 

\begin{definition}
A GOGAm triangle of size~$n$ is a Gelfand-Tsetlin triangle such that~$X_{nn}\leq n$ 
and, for all~$1\leq k\leq n-1$, and all~$n=j_0> j_1> j_2\ldots>j_{n-k}\geq 1$, 
one has
  \begin{equation}\label{GOGAm}
   \left(\sum_{i=0}^{n-k-1}X_{j_i+i, j_i}-X_{j_{i+1}+i,j_{i+1}}\right)+X_{j_{n-k}+n-k,j_{n-k}}\leq k
  \end{equation}
\end{definition}

It is shown in~\cite{BC} that GOGAm triangles are exactly the Gelfand-Tsetlin 
triangles obtained by applying the Sch\"utzenberger involution to Magog triangles. 
It follows that the problem of finding an explicit bijection between Gog and Magog 
triangles can be reduced to that of finding an explicit bijection between Gog and 
GOGAm triangles. In the sequel,  Magog triangles  will  not be considered  anymore.

\begin{definition}
A~$(n,k)$ right GOGAm    trapezoid   (for~$k\leq n$) is an array of positive 
integers\\ $X=(x_{i,j})_{n\geqslant i\geqslant j\geqslant 1; i-j\leq k-1}$ formed 
from the~$k$ rightmost SW-NE diagonals of a GOGAm triangle of size $n$.
\end{definition}
Below is a~$(5,2)$ right GOGAm trapezoid.

\begin{equation}
    \begin{split}
    \begin{tikzpicture}[scale=.6]
\draw(6,4) node{2} ;\draw(8,4) node{4} ;
\draw(5,3) node{2} ;\draw(7,3) node{4} ;
\draw(4,2) node{2} ;\draw(6,2) node{4} ;
\draw(3,1) node{1} ;\draw(5,1) node{4} ;
\draw(4,0) node{3} ;
    \end{tikzpicture}
    \end{split}
\end{equation}

\begin{definition}
A~$(n,k)$ left GOGAm trapezoid (for $k\leq n$) is  an array of positive integers\\
$X=(x_{i,j})_{n\geqslant i\geqslant j\geqslant 1; k\geqslant j}$ formed from the~$k$ 
leftmost NW-SE diagonals of a GOGAm trapezoid of size~$n$.
\end{definition}

Below is a~$(5,2)$ left GOGAm trapezoid.

\begin{equation}\label{eq::52::left::GOGAm::trapezoid}
    \begin{split}
    \begin{tikzpicture}[scale=.6]
\draw  (0,4) node{1} ;\draw  (2,4) node{1} ;
\draw  (1,3) node{1} ;\draw(3,3) node{2} ;
\draw  (2,2) node{1} ;
\draw(4,2) node{2} ;
\draw(3,1) node{2} ;\draw(5,1) node{3} ;
\draw(4,0) node{3} ;
    \end{tikzpicture}
    \end{split}
\end{equation}
\subsubsection{Minimal completion}
The set of GOGAm triangles is not a sublattice of the Gelfand-Tsetlin 
triangles, nevertheless, given a right (resp. a left) GOGAm trapezoid, 
we shall see that there exists a smallest GOGAm triangle which extends 
it. We call this GOGAm triangle the {\sl canonical completion} of the 
left (resp. the right) GOGAm trapezoid. 
\begin{prop}\label{comp}$ $
\begin{enumerate}
 \item Let~$X$ be a~$(n,k)$ right GOGAm trapezoid, 
 then its canonical completion is given by
  \begin{equation}
  X_{ij}=1\quad \text{for $n\geqslant i\geqslant j+k$}.
  \end{equation}
 \item Let~$X$ be a~$(n,k)$ left GOGAm trapezoid, 
 then its canonical completion is given by
  \begin{equation}
  X_{i,j}=X_{i-j+k,k}\quad \text{for $n\geqslant i\geqslant j\geqslant k$}
  \end{equation}
  in other words, the  added entries are  constant on SW-NE diagonals
\end{enumerate}
\end{prop}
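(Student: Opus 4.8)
The plan is, in each of the two cases, to carry out two steps. Step~1: show that the values prescribed on the added entries are a lower bound for the corresponding entries of \emph{every} Gelfand--Tsetlin triangle extending the given trapezoid; since any such extension also agrees with the trapezoid on the latter's own entries, it will follow that the triangle $X$ produced by the prescribed completion is $\leq$ (entrywise) every GOGAm triangle extending the trapezoid. Step~2: show that $X$ is itself a GOGAm triangle. Together these give that $X$ is the smallest GOGAm triangle extending the trapezoid.

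Step~1 is easy. In the right case the entries are positive integers, hence $\geq 1$, which is the prescribed value. In the left case the Gelfand--Tsetlin inequalities give $Y_{i,j}\geq Y_{i-1,j-1}\geq\dots\geq Y_{i-j+k,k}$ for $j\geq k$, and $Y_{i-j+k,k}$, lying in the trapezoid, is exactly the prescribed value. That $X$ satisfies the Gelfand--Tsetlin inequalities is routine: in the added region and across its boundary with the trapezoid, every required inequality is either trivial, or an equality, or an inequality already holding inside the trapezoid (in the left case, inside its $k$-th column).

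Step~2, the verification of the GOGAm inequalities~\eqref{GOGAm} for $X$, is the substantial point; one cannot argue by monotonicity, since the left-hand side of~\eqref{GOGAm} carries entries of both signs. I would first rewrite~\eqref{GOGAm}, by telescoping the sum, in the equivalent form: for every $p$ with $1\leq p\leq n-1$ and every chain $n=j_0>j_1>\dots>j_{n-p}\geq 1$,
\[
  X_{n,n}+\sum_{\ell=1}^{n-p}\bigl(X_{j_\ell+\ell,\,j_\ell}-X_{j_\ell+\ell-1,\,j_\ell}\bigr)\ \leq\ p ,
\]
each summand being $\leq 0$ by Gelfand--Tsetlin, so that the condition says that $X_{n,n}$ decreased by a sum of non-negative ``descents'' along the chain stays $\leq p$. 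Let $T$ be a GOGAm triangle from which the trapezoid is extracted; then $T$ obeys this inequality for all $p$ and all chains, $X$ coincides with $T$ on the trapezoid, and $X\leq T$ on the added entries. Fix a chain and a parameter $p$, and write $\mathrm{LHS}_X$ (resp.\ $\mathrm{LHS}_T$) for the left-hand side above evaluated on $X$ (resp.\ on $T$); the aim is to deduce $\mathrm{LHS}_X\leq p$ from $T$'s inequality along a suitably chosen chain.

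In the right case one keeps the \emph{same} chain: a step $\ell>k$ contributes a zero summand to $\mathrm{LHS}_X$ (both its endpoints lie in the all-ones region) and the step $\ell=k$ contributes $1-T_{j_k+k-1,j_k}$, so that $\mathrm{LHS}_X-\mathrm{LHS}_T$ is controlled by a telescoping sum which, using the iterated ``up-right'' inequality $T_{j_\ell+\ell-1,j_\ell}\leq T_{j_{\ell-1}+\ell-1,j_{\ell-1}}$ and positivity of the entries, collapses to a quantity $\leq 0$; hence $\mathrm{LHS}_X\leq\mathrm{LHS}_T\leq p$. In the left case, let $m$ be the number of steps $\ell$ with $j_\ell\geq k$ (necessarily $\ell=1,\dots,m$); such a step contributes to $\mathrm{LHS}_X$ the same summand irrespective of $j_\ell$, read off the $k$-th column. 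Replacing this initial block by the staircase $\hat\jmath_\ell=k+m-\ell$ for $\ell=1,\dots,m$ (and $\hat\jmath_\ell=j_\ell$ otherwise) produces a legal chain $\hat\jmath$ for $T$, and $\mathrm{LHS}_X-\mathrm{LHS}_T(\hat\jmath)$ telescopes — after using the interlacing $T_{r-1,c}\leq T_{r,c+1}$ and the monotonicity of the diagonal $(T_{i,i})$ — to $T_{k+m,k+m}-T_{n,n}\leq 0$. The main obstacle is exactly this matching step: correctly locating the portion of a given chain that falls in the added region, choosing the corresponding auxiliary chain of $T$, and arranging the cancellations so that the discrepancy between $\mathrm{LHS}_X$ and $\mathrm{LHS}_T$ ends up with the right sign; the boundary cases ($n-p<k$ on the right, $m=0$ on the left) are handled directly, reducing to the GOGAm inequality of $T$ along the same chain.
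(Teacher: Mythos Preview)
Your two-step plan is sound, and the details you sketch can be made to work: the auxiliary-chain argument in the left case does telescope to $T_{k+m,k+m}-T_{n,n}\leq 0$ once one uses $T_{r-1,c}\leq T_{r,c+1}$, and the right case collapses to $1-T_{j_{n-p}+n-p,\,j_{n-p}}\leq 0$ as you indicate. So the proposal is correct.

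However, your route is considerably more laborious than the paper's, and the reason is precisely your remark that ``one cannot argue by monotonicity, since the left-hand side of~\eqref{GOGAm} carries entries of both signs.'' That is true of your \emph{telescoped} form, but in the \emph{original} form of~\eqref{GOGAm} every difference $X_{j_i+i,j_i}-X_{j_{i+1}+i,j_{i+1}}$ is non-negative (both entries lie on the same SW--NE diagonal, along which the Gelfand--Tsetlin inequalities give monotonicity), and the final term is a positive entry. The paper exploits this: it packages Step~2 as a short lemma stating that (i) replacing the upper-left block by $1$'s, and (ii) propagating a value $X_{m,k}$ along a single SW--NE diagonal, can only \emph{decrease} each term of~\eqref{GOGAm}, hence the whole left-hand side. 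The right case is then immediate from~(i), and the left case follows by applying~(ii) once per diagonal. No auxiliary chain is needed, and no telescoping: one just checks, term by term, that the replacement cannot increase the contribution. Your telescoped rewriting discards exactly the positivity that makes this work, which is why you are forced to manufacture a comparison chain $\hat\jmath$ and control a signed difference. Both arguments prove the same thing; the paper's is shorter and conceptually cleaner, while yours has the minor advantage of handling the left case in one shot rather than diagonal by diagonal.
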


\begin{proof}
In both cases, the completion above is the smallest Gelfand-Tsetlin triangle 
containing the trapezoid, therefore it is enough to check that if~$X$ is 
a~$(n,k)$ right or left GOGAm trapezoid, then its completion, 
as indicated in the proposition~\ref{comp},  
is a GOGAm triangle. The claim follows from the following lemma. 

\begin{lemma} Let~$X$ be a GOGAm triangle.
\begin{enumerate}
 \item[{\bf i)}]The triangle obtained from~$X$ by replacing the entries on 
 the upper left triangle~$(X_{ij}, n\geqslant i\geqslant j+k)$ by~$1$ is a GOGAm triangle. 
 \item[{\bf ii)}] Let~$n\geqslant m\geqslant k\geqslant1$.
If~$X$ is constant on each partial SW-NE diagonal~$(X_{i+l,k+l}; n-i\geqslant l\geqslant 0)$ 
for~$i\geqslant m+1$ then the triangle  obtained from~$X$ by 
replacing the entries~$(X_{m+l,k+l};  n-m\geqslant l\geqslant 1)$ 
by~$X_{m,k}$ is a GOGAm triangle.
\end{enumerate}
\end{lemma}

\begin{proof}
It is easily seen that the above replacements give a Gelfand-Tsetlin triangle.
Both proofs then follow  by inspection of the formula~(\ref{GOGAm}), 
which shows   that, upon making the above replacements, the quantity on the 
left cannot increase. 
 \end{proof}

{\sl End of proof of Proposition~\ref{comp}.}
The case of right GOGAm triangles is dealt with by part~{\bf i)} of the 
preceding Lemma.
The case of left trapezoids follows by replacing successively 
the SW-NE partial diagonals as in part {\bf ii)} of the Lemma. 
\end{proof}
\smallskip

For example, the completion of the $(5,2)$ left GOGAm trapezoid 
in~\eqref{eq::52::left::GOGAm::trapezoid} is as follows.
\begin{equation}
    \begin{split}
    \begin{tikzpicture}[scale=.6]
\tikzstyle{ntrap}=[color=red]
\draw  (0,4) node{1} ;\draw  (2,4) node{1} ;
\draw[ntrap](4,4) node{2} ;
\draw[ntrap](6,4) node{2} ;\draw[ntrap](8,4) node{3} ;
\draw  (1,3) node{1} ;\draw(3,3) node{2} ;
\draw[ntrap](5,3) node{2} ;\draw[ntrap](7,3) node{3} ;
\draw  (2,2) node{1} ;
\draw(4,2) node{2} ;
\draw[ntrap](6,2) node{3} ;
\draw(3,1) node{2} ;\draw(5,1) node{3} ;
\draw(4,0) node{3} ;
    \end{tikzpicture}
    \end{split}
\end{equation}

\section{Results and conjectures}
\label{sec::results::conjecture}
\begin{theorem}[Zeilberger \cite{zeilberger}]
For all~$k\leq n$, the~$(n,k)$ right Gog and GOGAm   
trapezoids are equienumerated
\end{theorem}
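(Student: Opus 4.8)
The plan is to derive this statement from the original theorem of Zeilberger on Gog and Magog trapezoids, combined with the description of GOGAm triangles given in~\cite{BC}. Recall that~\cite{zeilberger} establishes the Mills--Robbins--Rumsey conjecture: for all $k\leq n$, the $(n,k)$ Gog trapezoids and the $(n,k)$ Magog trapezoids of~\cite{mrr} are equienumerated. Hence it suffices to produce a bijection between the $(n,k)$ right Gog trapezoids of the present paper and the $(n,k)$ Gog trapezoids of~\cite{mrr}, and another between the $(n,k)$ right GOGAm trapezoids and the $(n,k)$ Magog trapezoids.

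The first bijection is immediate: as observed in Section~\ref{sec::basic::def}, the right Gog trapezoids defined here coincide, up to an elementary reindexation, with the Gog trapezoids of~\cite{mrr} (their canonical completions being exactly the objects called Gog trapezoids in~\cite{BC}). So the number of $(n,k)$ right Gog trapezoids equals the number of $(n,k)$ Gog trapezoids in the sense of~\cite{mrr}.

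For the second bijection I would proceed as follows. By~\cite{BC}, the GOGAm triangles of size $n$ are precisely the images of the Magog triangles of size $n$ under the Sch\"utzenberger involution $S$ on Gelfand--Tsetlin triangles of size $n$. The point I would establish is that $S$ is \emph{compatible with truncation} to the $k$ rightmost SW--NE diagonals, meaning that the restriction of $S(X)$ to those diagonals depends only on the restriction of $X$ to them. Granting this, the assignment sending the right truncation of a Magog triangle $M$ to the right truncation of $S(M)$ is a well-defined map from $(n,k)$ Magog trapezoids to $(n,k)$ right GOGAm trapezoids; since $S$ is an involution and maps Magog triangles bijectively onto GOGAm triangles, applying the same construction with the roles of the two classes exchanged shows this map is a bijection. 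Equivalently, using the \emph{canonical completions} of Proposition~\ref{comp}, the compatibility amounts to saying that $S$ carries the Magog triangles satisfying $X_{ij}=1$ for $i\geq j+k$ bijectively onto the GOGAm triangles with the same property. Combining this bijection with the first one and with Zeilberger's theorem yields the claim.

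The main obstacle is precisely the compatibility of $S$ with right truncation. The Sch\"utzenberger involution is defined globally, as a composite of Bender--Knuth-type toggles sweeping over the whole triangle, so locality on the rightmost diagonals is not evident a priori. To establish it I would either invoke the corresponding statement for right trapezoids already contained in~\cite{BC}, or check it directly: one fixes the standard order in which the elementary toggles are applied to compute $S$ and verifies that, throughout, a toggle acting on an entry of one of the $k$ rightmost diagonals uses only entries lying on those same diagonals together with the fixed bottom row $1,2,\dots,n$. Note finally that the simple left--right involution $X\mapsto\tilde X$ of Section~\ref{sec.gog} is of no help here, since it does not preserve the class of GOGAm triangles; this is exactly why the argument has to pass through Magog triangles and the involution $S$.
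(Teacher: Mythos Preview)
Your approach is the same as the paper's: reduce to Zeilberger's equienumeration of $(n,k)$ Gog and Magog trapezoids, and then pass from Magog to GOGAm via the Sch\"utzenberger involution. The paper's ``proof'' is in fact a one-sentence remark doing exactly this, relying on~\cite{BC} for the fact that the Sch\"utzenberger involution carries Magog trapezoids to GOGAm trapezoids. So your first option---invoking~\cite{BC} for the compatibility---is precisely what the paper does, and on that basis your proposal is correct.

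Your second option, the direct locality argument, is the one place where you overreach. The claim that ``a toggle acting on an entry of one of the $k$ rightmost diagonals uses only entries lying on those same diagonals together with the fixed bottom row $1,2,\dots,n$'' is not true as stated: the Bender--Knuth toggle at an entry on the boundary of the right $k$-strip reads its NW neighbour, which lies outside the strip and is not in general on row~$n$ (and for Magog triangles row~$n$ is not $1,2,\dots,n$ anyway). Already for $n=2$, $k=1$ one sees that $S(X)_{1,1}$ depends on $X_{2,1}$. The compatibility of $S$ with right trapezoids established in~\cite{BC} is not a raw locality statement; it goes through the specific description of Magog and GOGAm triangles there. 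So your sketch of a direct check would not go through, but since you also offer the citation route, the overall argument stands.
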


Actually Zeilberger proves this theorem for Gog and Magog 
trapezoids, but composing by the Sch\"utzenberger involution yields the
above result.
In \cite{BC} a bijective proof is given for $(n,1)$ and $(n,2)$ right trapezoids.

\begin{conj}\label{conjecture::equipotence::gog::GOGAm::nk::gauche}
For all~$k\leq n$, the~$(n,k)$ left Gog and GOGAm trapezoids are equienumerated.
\end{conj}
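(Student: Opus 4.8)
\medskip
\noindent The plan is to argue by induction on the number $k$ of diagonals, in the spirit of the proof in~\cite{BC} for right trapezoids, but exploiting the extra rigidity of cutting on the left that is already visible in the minimal‐completion formulas of Proposition~\ref{comp}: there the supplementary entries of a left Gog trapezoid depend only on its rightmost NW--SE diagonal, and those of a left GOGAm trapezoid are simply constant along SW--NE diagonals. For $k=1$ one checks directly that $(n,1)$ left Gog and left GOGAm trapezoids are equienumerated, which is the base case (this, together with the case $k=2$, is treated in Section~\ref{sec::bijection::gog::gogam::left::trap}). For the inductive step one seeks a bijection $\Phi_k$ from $(n,k)$ left Gog trapezoids to $(n,k)$ left GOGAm trapezoids that is compatible with deleting the $k$-th NW--SE diagonal, so that forgetting that diagonal recovers $\Phi_{k-1}$; what then has to be matched is the single extra diagonal, whose admissible values are, on the Gog side, constrained only by the local Gelfand--Tsetlin and strict-row conditions, and, on the GOGAm side, by the inequalities~\eqref{GOGAm}.

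A second and more economical route becomes available once one notices that the involution $X\mapsto\tilde X$ of Section~\ref{sec.gog} restricts to a bijection between $(n,k)$ left and $(n,k)$ right Gog trapezoids. Combined with Zeilberger's theorem (right Gog and right GOGAm trapezoids are equienumerated), this shows that Conjecture~\ref{conjecture::equipotence::gog::GOGAm::nk::gauche} is \emph{equivalent} to the assertion that $(n,k)$ left and $(n,k)$ right GOGAm trapezoids are equienumerated. One would then try either to produce an explicit bijection directly between these two classes of GOGAm trapezoids, or, more modestly, to set up a recursion in $n$ (with $k$ fixed) for the number of $(n,k)$ left GOGAm trapezoids and verify that it matches the known count of the right trapezoids.

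In both routes the crux is the same. The defining conditions~\eqref{GOGAm} of GOGAm triangles are \emph{global}: one inequality for each strictly decreasing sequence $n=j_0>j_1>\cdots>j_{n-k}\geq 1$, hence exponentially many, and they couple all the SW--NE diagonals at once, whereas the Gog conditions are purely local. Consequently, understanding how the set of admissible left GOGAm trapezoids changes when one more NW--SE diagonal is adjoined (the inductive step of the first route), or building and checking a left/right GOGAm bijection (the second route), demands a workable reformulation of~\eqref{GOGAm} tailored to left trapezoids. The completion Lemma used for Proposition~\ref{comp}---which asserts that passing to the left completion never increases the left-hand side of~\eqref{GOGAm}---is the first instance of such a reformulation, and I expect the real difficulty to lie in upgrading this kind of monotonicity analysis to a full description of which left Gelfand--Tsetlin trapezoids extend to GOGAm triangles. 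The phenomenon one would hope to establish---and which the cases $k\leq 2$ below already illustrate---is that, for left trapezoids, only a small and explicitly describable subfamily of the inequalities~\eqref{GOGAm} is ever binding.
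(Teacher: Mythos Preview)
The statement you are attempting is labelled a \emph{Conjecture} in the paper; the authors do not prove it for general $k$. What the paper does prove is the special cases $k=1$ and $k=2$, by explicit bijections given in Section~\ref{sec::bijection::gog::gogam::left::trap}. So there is no ``paper's own proof'' to compare against here.

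Your proposal is a strategy outline, not a proof, and it contains a genuine gap at exactly the point where the work would have to be done. In the first route (induction on $k$), the entire content of the argument is the construction of the map $\Phi_k$ compatible with forgetting the $k$-th diagonal; you describe the properties it should have but do not construct it, and the paper's experience with $k=2$ (where the bijection already requires a non-obvious modification of the standard procedure) suggests that a uniform inductive construction is far from routine. In the second route, your reduction is correct---using the involution $X\mapsto\tilde X$ and Zeilberger's theorem, the conjecture is equivalent to the statement that left and right $(n,k)$ GOGAm trapezoids are equienumerated---but this is a reformulation, not a reduction to something known: the involution $X\mapsto\tilde X$ lives on Gog triangles and has no obvious analogue on the GOGAm side, so you still owe either a bijection or a verified recursion, neither of which you supply.

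Your final paragraph essentially concedes this: you identify the ``real difficulty'' as obtaining a usable reformulation of the inequalities~\eqref{GOGAm} for left trapezoids, and you leave that difficulty open. That is an honest assessment, but it means the proposal does not prove the conjecture. This is consistent with the paper, which also leaves it open.
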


In the next section we will give a bijective proof 
of this conjecture for~$(n,1)$ and~$(n,2)$ trapezoids.

Remark that the right and left Gog trapezoids of   shape~$(n,k)$ 
are equienumerated (in fact a simple bijection between 
them was given above).
\smallskip

If we consider left GOGAm  trapezoids as GOGAm triangles, 
using the canonical completion, then we can take their image by the 
Sch\"utzenberger involution and obtain a subset of the Magog triangles, 
for each~$(n,k)$. It seems however that this subset does not have a simple 
direct characterization. This shows that GOGAm triangles and trapezoids are 
a useful tool in  the bijection problem between Gog and Magog triangles.


\section{Bijections between Gog and GOGAm left trapezoids}
\label{sec::bijection::gog::gogam::left::trap}
\subsection{$(n,1)$ left trapezoids}
The sets of  $(n,1)$  left Gog trapezoids and of  $(n,1)$  left GOGAm 
trapezoids  coincide with the set of sequences~$X_{n,1},\ldots, X_{1,1}$ 
satisfying~$X_{j,1}\leq n-j+1$ (note that these sets  are counted by Catalan numbers). 
Therefore the identity map provides a trivial bijection between these two sets.

\subsection{$(n,2)$ left trapezoids}
In order to treat the~$(n,2)$ left  trapezoids we will recall some definitions 
from~\cite{BC}.
\subsubsection{Inversions}  

\begin{definition}
An  inversion in a Gelfand-Tsetlin triangle is 
a pair $(i,j)$ such that $X_{i,j}=X_{i+1,j}$.
\end{definition}

For example, the  Gog triangle in~\eqref{eq::inversion::triangle::gog} 
contains three inversions, $(2,2)$, $(3,1)$, $(4,1)$, the respective 
equalities being in red 
on this picture.
\begin{equation}\label{eq::inversion::triangle::gog}
    \begin{split}
    \begin{tikzpicture}[scale=.6]
\tikzstyle{inv}=[color=red,line width=.4mm]
       \draw  (1,3) node{1} ;\draw  (3,3) node{2} ;\draw  (5,3) node{3} ;\draw  (7,3) node{4} ;\draw  (9,3) node{5} ;
             \draw  (2,2) node{1} ;\draw  (4,2) node{3} ;\draw  (6,2) node{4} ;\draw  (8,2) node{5} ;
                   \draw  (3,1) node{1} ;\draw  (5,1) node{4} ;\draw  (7,1) node{5} ;
                         \draw  (4,0) node{2} ;\draw  (6,0) node{4} ;
                         \draw  (5,-1) node{3} ;
  \draw[inv] (1.9,2.2)--(1.1,2.85);\draw[inv] (2.9,1.2)--(2.1,1.85);
  \draw[inv] (5.9,.2)--(5.1,.85);
  \end{tikzpicture}
    \end{split}
 \end{equation}

\begin{definition}
Let~$X=(X_{i,j})_{n\geqslant i\geqslant j\geqslant 1}$ be a Gog triangle 
and let~$(i,j)$ be such that~$1\leqslant j\leqslant i\leqslant n$. 

An inversion~$(k,l)$ covers~$(i,j)$ if~$i=k+p$ and~$j=l+p$ for 
some~$p$ with~$1\leqslant p\leqslant n-k$. 
\end{definition}
 The entries~$(i,j)$ covered by an inversion are depicted with~$"+"$ on 
 the following picture.

\begin{equation}\label{eq::couverture::triangle::generique}
    \begin{split}
    \begin{tikzpicture}[scale=.6]
\tikzstyle{inv}=[color=red,line width=.4mm]
      \draw  (1,3) node{$\circ$} ;\draw  (3,3) node{+} ;\draw  (5,3) node{+} ;\draw  (7,3) node{$\circ$} ;\draw  (9,3) node{+} ;
            \draw  (2,2) node{$\circ$} ;\draw  (4,2) node{+} ;\draw  (6,2) node{$\circ$} ;\draw  (8,2) node{+} ;
                  \draw  (3,1) node{$\circ$} ;\draw  (5,1) node{$\circ$} ;\draw  (7,1) node{+} ;
                        \draw  (4,0) node{$\circ$} ;\draw  (6,0) node{$\circ$} ;
                                 \draw  (5,-1) node{$\circ$} ;
  \draw[inv] (1.9,2.2)--(1.1,2.85);\draw[inv] (2.9,1.2)--(2.1,1.85);
  \draw[inv] (5.9,.2)--(5.1,.85);
  \end{tikzpicture}
    \end{split}
 \end{equation}

\subsubsection{Standard procedure}
The basic idea for our bijection  is that for any inversion 
in the Gog triangle we should subtract $1$  to the entries covered by 
this inversion, scanning the inversions along the successive NW-SE diagonals, 
starting from the rightmost diagonal, and scanning each diagonals from NW to SE. 
We call this the {\sl standard procedure}. This procedure does not always yield 
a Gelfand-Tsetlin triangle, but one can check that it does so if one starts from 
a Gog triangle corresponding to a permutation matrix in the correspondance 
between alternating sign matrices and Gog triangles. Actually the triangle 
obtained is also a GOGAm triangle.

Although we will not use it below, it is informative to make the following remark.

\begin{prop}
Let~$X$ be the  canonical completion of a left~$(n,k)$ Gog trapezoid. 
The triangle~$Y$ obtained by applying  the standard procedure to 
the~$n-k+1$ rightmost NW-SE diagonals of~$X$ is a Gelfand-Tsetlin triangle such 
that~$Y_{i+l,k+l}=X_{i,k}$ for~$n-i\geqslant l\geqslant 1$.
\end{prop}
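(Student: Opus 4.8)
The plan is to analyze directly what the standard procedure does to the rightmost $n-k+1$ NW-SE diagonals of the canonical completion $X$, and to show that the cumulative effect of all the subtractions on column $k$ and to its right produces exactly the ``constant on SW-NE diagonals'' pattern predicted. First I would recall, from the Proposition on minimal completion of left Gog trapezoids, that for $j\geqslant k$ one has $X_{i,j}=\max_{0\leqslant p\leqslant j-k}(X_{i-p,k}+j-k-p)$; in particular, reading along a fixed SW-NE diagonal $i-j=\mathrm{const}$, the entries of $X$ outside the trapezoid are obtained from the rightmost diagonal of the trapezoid by a simple ``staircase max'' rule. I would then observe that an inversion $(i,j)$ with $j\geqslant k$ (i.e.\ in the completed region) occurs precisely when, in this max, the term $p=0$ is not the unique maximizer in a suitable sense — more concretely, $X_{i,j}=X_{i+1,j}$ exactly when $X_{i+1,j}$ is ``governed'' by an entry strictly below row $i$, which happens exactly when the staircase described by the sequence $X_{n,k},X_{n-1,k},\dots,X_{k,k}$ is flat or decreasing at the relevant spot.

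Next I would carry out the bookkeeping of the standard procedure itself. Scanning NW-SE diagonals from right to left and, within each, from NW to SE, every time an inversion $(i,j)$ (with $j\geqslant k$) is met we subtract $1$ from all entries $(i+p,j+p)$, $1\leqslant p\leqslant n-(i)$... — i.e.\ from the entry immediately SE of the inversion, all the way down the SW-NE diagonal through $(i,j)$. The key combinatorial point is that these subtractions telescope along each SW-NE diagonal: the total amount subtracted from $X_{i+l,k+l}$ equals the number of inversions $(i',k)$-type positions strictly NW of it on the completed part, and by the staircase-max formula this total is exactly $X_{i+l,k+l}-X_{i,k}$. Hence after the procedure $Y_{i+l,k+l}=X_{i,k}$ for all $l\geqslant 1$, which is the asserted identity. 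I would present this as: fix a SW-NE diagonal indexed by its topmost completed cell $(i,k)$; show by downward induction on $l$ that the running value at $(i+l,k+l)$ after processing all inversions weakly NW of the $l$-th cell on that diagonal equals $X_{i,k}$, the base case $l=0$ being that the cell $(i,k)$ lies in the trapezoid and is never decreased (nothing covers it, since covering inversions of $(i,k)$ would sit outside the completion region or in a column $<k$ which the procedure leaves untouched here).

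Finally I would check the two remaining assertions: that $Y$ is a Gelfand-Tsetlin triangle, and that the entries in columns $<k$ and the trapezoid entries themselves are unchanged (so the statement is really about columns $\geqslant k$). The latter is immediate because the procedure is only applied to the $n-k+1$ rightmost NW-SE diagonals, i.e.\ to columns $\geqslant k$, and within those, every inversion used has both its legs and all covered cells in columns $\geqslant k$. For the Gelfand-Tsetlin inequalities I would verify the local conditions $Y_{i+1,j}\leqslant Y_{i,j}\leqslant Y_{i+1,j+1}$ directly from the now-explicit description $Y_{i+l,k+l}=X_{i,k}$ on the diagonals together with the original inequalities on the trapezoid: on a fixed SW-NE diagonal $Y$ is constant (hence the relation $Y_{i+1,j}\leqslant Y_{i,j}$ across adjacent diagonals reduces to $X_{\cdot,k}$ being weakly increasing up the column, which it is), and the relation $Y_{i,j}\leqslant Y_{i+1,j+1}$ reduces to comparing two possibly different constants, which again follows from monotonicity of the column $X_{\cdot,k}$ of the trapezoid.

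The main obstacle I expect is the telescoping/counting step: making precise which inversions of the completed triangle get created by the staircase-max formula, in what order the standard procedure visits them, and verifying that the total subtraction on each diagonal cell comes out to exactly $X_{i+l,k+l}-X_{i,k}$ with no off-by-one errors — in particular handling the boundary between the trapezoid proper and its completion, and the fact that an inversion in column $k$ exactly at the trapezoid's edge may or may not be visited depending on the NW-to-SE scanning convention. Everything else (Gelfand-Tsetlin checks, ``columns $<k$ untouched'') is routine once that accounting is in place.
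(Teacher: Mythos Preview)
Your approach is correct and is in fact more informative than the paper's own proof, which consists of the single sentence ``The Proposition is proved easily by induction on the number $n-k+1$.''  So the two routes differ: the paper inducts on the number of NW--SE diagonals processed, while you fix a SW--NE diagonal and compute directly the total subtraction at each cell $(i+l,k+l)$, showing it telescopes to $X_{i+l,k+l}-X_{i,k}$.  Your key observation---that along the diagonal through $(i,k)$ the completion values $b_q:=X_{i+q,k+q}$ satisfy $b_q\in\{b_{q-1},b_{q-1}+1\}$ and that $b_q=b_{q-1}+1$ exactly when there is an inversion at $(i+q-1,k+q-1)$---is the heart of the matter and makes the identity $Y_{i+l,k+l}=X_{i,k}$ transparent.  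The Gelfand--Tsetlin verification at the end is routine, as you say.

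Two small points to tighten.  First, your compass directions are systematically off: the entries covered by an inversion at $(i,j)$ are $(i+p,j+p)$ with $p\geqslant 1$, which in the paper's display sit \emph{above and to the right} (toward NE), not SE; likewise the inversions covering a given cell lie below-left on the same SW--NE diagonal, not ``NW''.  This does not affect your algebra but should be corrected.  Second, you should state explicitly why the inversions you count are those of the original $X$ rather than of some intermediate array: since the procedure processes columns right to left and each inversion in column $j$ only modifies entries in columns $>j$, column $j$ is still untouched at the moment its inversions are scanned.  You allude to this but never say it; without it the telescoping count is not obviously the right thing to compute.  The boundary worry you flag (inversions in column $k$) is not actually an issue: all inversions in column $k$ are visited, and they cover only cells in columns $>k$, so the trapezoid itself is left intact.
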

\begin{proof}
The Proposition is proved easily by induction on the number~$n-k+1$. 
\end{proof}

For example, applied to the $(5,2)$ left Gog trapezoid in \eqref{eq::52::left::gog::trapezoid}, this yields
\smallskip

\begin{equation}
    \begin{split}
    \begin{tikzpicture}[scale=.6]
\tikzstyle{ntrap}=[color=red]
\draw  (0,4) node{$1$} ;\draw  (2,4) node{$2$} ;
\draw[ntrap](4,4) node{$3$} ;
\draw[ntrap](6,4) node{$3$} ;\draw[ntrap](8,4) node{$4$} ;
\draw  (1,3) node{$1$} ;\draw(3,3) node{$3$} ;
\draw[ntrap](5,3) node{$3$} ;\draw[ntrap](7,3) node{$4$} ;
\draw  (2,2) node{$2$} ;\draw(4,2) node{$3$} ;
\draw[ntrap](6,2) node{$4$} ;
\draw(3,1) node{$2$} ;\draw(5,1) node{$4$} ;
\draw(4,0) node{$4$} ;
    \end{tikzpicture}
    \end{split}
\end{equation}

Like in~\cite{BC} the bijection between left Gog and GOGAm 
trapezoids will be obtained by a modification of the Standard Procedure.

\subsubsection{Characterization of $(n,2)$ GOGAm trapezoids}
The family of  inequalities~\eqref{GOGAm} simplifies in the case 
of~$(n,2)$ GOGAm trapezoids, indeed if we identify such a trapezoid 
with its canonical completion, then most of the terms in the left 
hand side are zero, so that these inequalities reduce to

\begin{eqnarray}
X_{i,2}&\leqslant& n-i+2\label{GOGAm21}\\
X_{i,2}-X_{i-1,1}+X_{i,1}&\leqslant &n-i+1\label{GOGAm22}
\end{eqnarray}

Remark that,  since~$-X_{i-1,1}+X_{i,1}\leqslant 0$, the inequality \eqref{GOGAm22} follows 
 from \eqref{GOGAm21} unless~$X_{i-1,1}=X_{i,1}$.
\subsubsection{From Gog to GOGAm}
Let~$X$ be a~$(n,2)$ left Gog trapezoid. We shall construct 
a~$(n,2)$ left GOGAm trapezoid~$Y$ by scanning the inversions in the 
leftmost NW-SE diagonal of~$X$, starting from NW. Let us denote 
by~$n> i_1>\ldots>i_k\geq 1$ these inversions, so that~$X_{i,1}=X_{i+1,1}$ 
if and only if~$i\in\{i_1,\ldots,i_k\}$. We also put~$i_0=n$. 
We will  construct  a sequence of~$(n,2)$ left Gelfand-Tsetlin
trapezoids~$X=Y^{(0)},Y^{(1)},Y^{(2)},\ldots,Y^{(k)}=Y$. 
 \smallskip

Let us  assume that we have constructed the trapezoids up to~$Y^{(l)}$,   
that~$Y^{(l)}\leq X$,  that~$Y^{(l)}_{ij}= X_{ij} $  for~$i\leqslant i_{l} $,
and that inequalities \eqref{GOGAm21} and \eqref{GOGAm22} are satisfied 
by~$Y^{(l)}$ for~$i\geqslant i_l+1$. This is the case for~$l=0$.
\smallskip

Let~$m$ be the largest integer such that~$Y^{(l)}_{m,2}=Y^{(l)}_{i_{l+1}+1,2}$.
We put
$$\begin{array}{rclcl}
Y^{(l+1)}_{i,1}&=&Y^{(l)}_{i,1}&\quad\mbox{ for }\quad& n\geqslant i\geqslant m\ \quad\mbox{ and }\quad  i_{l+1}\geqslant i\\
Y^{(l+1)}_{i,1}&=&Y^{(l)}_{i+1, 1}&\quad\mbox{ for }\quad& m-1\geqslant i> i_{l+1}\\  
Y^{(l+1)}_{i,2}&=&Y^{(l)}_{i,2}&\quad\mbox{ for }\quad& n\geqslant i\geqslant m+1\ \quad\mbox{ and }\quad  i_{l+1}\geqslant i\\
Y^{(l+1)}_{i,2}&=&Y^{(l)}_{i,2}-1&\quad\mbox{ for }\quad& m\geqslant i\geqslant i_{l+1}+1.                 
\end{array}$$
\smallskip

From the definition of~$m$, and the fact that~$X$ is a Gog trapezoid, 
we see that this new triangle is a Gelfand-Tsetlin triangle, 
that~$Y^{(l+1)}\leq X$, and that~$Y^{(l+1)}_{ij}= X_{ij} $ for~$i\leqslant i_{l+1}$.
Let us now check that the  trapezoid~$Y^{(l+1)}$ satisfies the 
inequalities~\eqref{GOGAm21} and \eqref{GOGAm22} for~$i\geqslant i_{l+1}+1$.
The first series of inequalities, for~$i\geqslant i_{l+1}+1$, 
follow from the fact that~$Y^{(l)}\leq X$.
For the second series, they are satisfied for~$i\geqslant m+1$ 
since this is the case for~$Y^{(l)}$. For~$m\geqslant i\geqslant i_{l+1}+1$, 
observe that 
\begin{equation*}
 Y^{(l+1)}_{i,2}-Y^{(l+1)}_{i+1,1}+Y^{(l+1)}_{i+1,1}\leqslant Y^{(l+1)}_{i,2}= Y^{(l+1)}_{m,2}=Y^{(k)}_{m,2}-1\leqslant n-m+1
\end{equation*}
 by \eqref{GOGAm21} for~$Y(l)$, from which \eqref{GOGAm22} follows.
\smallskip
 
This proves that~$Y^{(l+1)} $ again satisfies the induction hypothesis. 
Finally~$Y=Y^{(k)}$ is a GOGAm triangle: indeed inequalities \eqref{GOGAm21}
follow again from~$Y^{(l+1)}\leqslant X$, and \eqref{GOGAm22} for~$i\leq i_{k}$ 
follow from the fact that there are no inversions in this range. It follows that
the above algorithm provides a map from~$(n,2)$ left Gog trapezoids to~$(n,2)$ 
left  GOGAm trapezoids.Observe that the number of inversions in the leftmost 
diagonal of~$Y$ is the same as for~$X$, but the positions of these inversions 
are not the same in general.
\subsubsection{Inverse map}\label{invert}

We now describe the inverse map, from GOGAm left trapezoids to Gog left trapezoids.
\smallskip

We start from an~$(n,2)$ GOGAm left trapezoid~$Y$, and construct a sequence \\
$Y=Y^{(k)},Y^{(k-1)},Y^{(k-2)},\ldots, Y^{(0)}=X$ of intermediate Gelfand-Tsetlin 
trapezoids.\\ Let~$n-1\geqslant \iota_1>\iota_2\ldots>\iota_k\geqslant 1$ be 
the inversions of the leftmost diagonal of~$Y$, and let~$\iota_{k+1}=0$.
 Assume that~$Y^{(l)}$ has been constructed and that~$Y_{ij}^{(l)}=Y_{ij}$ 
 for~$i-j\geqslant \iota_{l+1}$. 
This is the case for~$l=k$.
\smallskip

Let~$p$ be the smallest integer such that~$Y^{(l)}_{i_{l}+1,2}=Y^{(l)}_{p,2}$. 
We put 
$$\begin{array}{rclcl}
 Y^{(l-1)}_{i,1}&=&Y^{(l)}_{i,1}&\quad\mbox{ for }\quad& n\geqslant i\geqslant \iota_{l}+1\ \quad\mbox{ and }\quad p\geqslant i\\
Y^{(l-1)}_{i,1}&=&Y^{(l)}_{i-1,1}&\quad\mbox{ for }\quad& \iota_l\geqslant i\geqslant p\\  
Y^{(l-1)}_{i,2}&=&Y^{(l)}_{i,2}&\quad\mbox{ for }\quad& n\geqslant i\geqslant \iota_l+2\ \quad\mbox{ and }\quad p-1\geqslant i\\
Y^{(l-1)}_{i,2}&=&X^{(l)}_{i,2}-1&\quad\mbox{ for }\quad& \iota_l+1\geqslant i\geqslant p.
\end{array}$$
\smallskip

It is immediate to check that if~$X$ is an~$(n,2)$ left Gog trapezoid, 
and~$Y$ is its image by the first algorithm then the above algorithm 
applied to~$Y$ yields~$X$ back, actually the sequence~$Y^{(l)}$ is the same. 
Therefore in order to prove the bijection we only need to show that 
if~$Y$ is a~$(n,2)$ left GOGAm trapezoid then the algorithm is well defined 
and~$X$ is a Gog left trapezoid. This is a bit cumbersome, but not difficult, 
and very similar to the opposite case, so we leave this task to the reader.

\subsubsection{A statistic}\label{stats}
Observe that in our bijection the value of the bottom 
entry~$X_{1,1}$ is unchanged when we go from Gog to GOGAm trapezoids.
The same was true of the bijection in~\cite{BC} for right trapezoids. 
Actually we make the following conjecture, which extends 
Conjecture~\ref{conjecture::equipotence::gog::GOGAm::nk::gauche} above.
\begin{conj}
For each~$n,k,l$ the~$(n,k)$ left Gog and GOGAm 
trapezoids with bottom entry~$X_{1,1}=l$ are equienumerated.
\end{conj}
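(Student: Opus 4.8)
The plan is to reduce the refined statement to the unrefined Conjecture~\ref{conjecture::equipotence::gog::GOGAm::nk::gauche} together with a compatibility check on the apex entry, and to dispose of the small cases completely. For $k=1$ there is nothing to prove: by the description in Section~\ref{sec::bijection::gog::gogam::left::trap}, the sets of $(n,1)$ left Gog and of $(n,1)$ left GOGAm trapezoids literally coincide, and $X_{1,1}$ is one of their coordinates, so the identity map is a statistic-preserving bijection on each fibre $\{X_{1,1}=l\}$. For $k=2$ one uses the bijection $X\mapsto Y$ constructed in Section~\ref{sec::bijection::gog::gogam::left::trap}: each elementary step either subtracts $1$ from entries $Y^{(l)}_{i,2}$ with $i\geq i_{l+1}+1\geq 2$, or transports the column-$1$ entries $Y^{(l)}_{i,1}$ with $i>i_{l+1}$ while leaving $Y^{(l)}_{i,1}$ for $i\leq i_{l+1}$ (in particular $i=1$) unchanged. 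Hence the bottom entry $X_{1,1}$ is never modified, as already recorded in Section~\ref{stats}, and the bijection restricts to each fibre $\{X_{1,1}=l\}$, proving the conjecture for $k\leq 2$.

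For general $k$ I would first try to upgrade, by induction on $k$, a hypothetical statistic-preserving bijection for $k-1$ diagonals to one for $k$ diagonals, by peeling off the new ($k$-th) NW--SE diagonal with a modified standard procedure in the spirit of the $k=2$ construction: scan the inversions of the new diagonal and, for each, subtract $1$ from the covered entries, clipping the affected region so that it never reaches row~$1$. The point to verify is that, exactly as for $k=2$, every elementary move either acts on rows of index $\geq 2$ or merely transports column-$j$ entries of index exceeding the current inversion, so that $X_{1,1}$ stays fixed throughout; if that holds, the induction preserves the apex statistic automatically and the refined conjecture follows from Conjecture~\ref{conjecture::equipotence::gog::GOGAm::nk::gauche}. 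A complementary, possibly more robust route is enumerative: via the involution $X\mapsto\tilde X$ of Section~\ref{sec.gog}, a left $(n,k)$ Gog trapezoid with $X_{1,1}=l$ corresponds to a right $(n,k)$ Gog trapezoid with bottom entry $n+1-l$, a quantity governed by the refined Mills--Robbins--Rumsey/Zeilberger enumeration of right Gog trapezoids; one would then want the matching refined count on the GOGAm side and match the two product formulas.

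The main obstacle is that for $k\geq 3$ the family of inequalities~\eqref{GOGAm} no longer collapses to a short list such as~\eqref{GOGAm21}--\eqref{GOGAm22}, so the combinatorial bookkeeping that makes the $k=2$ argument work is unavailable, and indeed even Conjecture~\ref{conjecture::equipotence::gog::GOGAm::nk::gauche} itself is open. The enumerative route runs into the difficulty, noted after Conjecture~\ref{conjecture::equipotence::gog::GOGAm::nk::gauche}, that the image of left GOGAm trapezoids under the Sch\"utzenberger involution has no known simple characterisation among Magog triangles, so transporting a Zeilberger-type refined formula across the involution is not straightforward; making either route work for $k\geq 3$ thus seems to require genuinely new input. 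As supporting evidence one can verify by computer that the refined counts agree for small $n$ and $k$, and observe that both the $k=1,2$ left bijections above and the right-trapezoid bijection of~\cite{BC} happen to preserve $X_{1,1}$ — which is precisely the heuristic behind the conjecture and the reason to expect that whatever bijection eventually settles Conjecture~\ref{conjecture::equipotence::gog::GOGAm::nk::gauche} can be arranged to fix the apex.
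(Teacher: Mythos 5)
This statement is not proved in the paper: it is stated as a conjecture (Section~\ref{stats}), and the only support the paper offers is the observation that the $(n,1)$ and $(n,2)$ left bijections of Section~\ref{sec::bijection::gog::gogam::left::trap}, like the right-trapezoid bijection of~\cite{BC}, leave the bottom entry $X_{1,1}$ unchanged. Your proposal establishes exactly those cases, and does so by the same mechanism as the paper: for $k=1$ the two sets coincide and the identity map trivially preserves the statistic, and for $k=2$ your check is correct, since each elementary step of the Gog$\to$GOGAm algorithm only alters column-$1$ entries with index $i>i_{l+1}\geq 1$ and column-$2$ entries with index $i\geq i_{l+1}+1\geq 2$, so the apex is never touched (your remark that the involution $X\mapsto\tilde X$ sends $X_{1,1}=l$ to bottom entry $n+1-l$ on the right-trapezoid side is also correct).

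For $k\geq 3$, however, your proposal contains no proof — only two sketched strategies (an inductive peeling of diagonals by a modified standard procedure, and an enumerative route through a refined Zeilberger-type count transported by the Schützenberger involution), both of which you yourself flag as blocked: the inequalities \eqref{GOGAm} no longer reduce to the short list \eqref{GOGAm21}--\eqref{GOGAm22}, and even the unrefined Conjecture~\ref{conjecture::equipotence::gog::GOGAm::nk::gauche} is open. So, measured against the statement as given, there is a genuine gap (no argument for general $k$); but this gap is not one you could have closed, since the statement is an open conjecture, and your treatment of the accessible cases coincides with the paper's own motivating evidence.
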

\subsection{An example}
In this section we work out an example of the algorithm from the Gog trapezoid~$X$  
to the  GOGAm trapezoid~$Y$ by showing the successive trapezoids~$Y^{(k)}$.
At each step we indicate the  inversion in green, as well as the entry 
covered by this inversion in shaded green, and the values of the parameters~$i_l, p$.
The algorithm also runs backwards to yield the GOGAm$\to$Gog bijection.
\smallskip

\begin{tikzpicture}[scale=0.6]
\tikzstyle{barr}= [opacity=.2,line width=6 mm,cap=round,color=red]
\tikzstyle{barb}= [opacity=.2,line width=6 mm,cap=round,color=blue]
\tikzstyle{barv}= [opacity=.2,line width=6 mm,cap=round,color=violet]
\tikzstyle{barvr}= [opacity=.2,line width=6 mm,cap=round,color=vertex]
\tikzstyle{comp}= [color=vertex]
\tikzstyle{ntrap}=[color=red]
\tikzstyle{inv}=[color=vertex,line width=0.3mm]
\begin{scope}
\node (11)at (1,10){$1$};\node (21)at (2,9) {$1$} ;\node  (31) at (3,8){$1$} ;
\node(41)at  (4,7) {$2$} ;\node (51)at  (5,6) {$3$} ;
\node (61)at  (6,5) {$3$} ;
\node (71)at  (7,4) {$3$} ;
\node (12)at  (3,10) {$2$} ;
\node (22)at  (4,9) {$2$} ;
\node (32)at (5,8){$4$};
\node (42)at (6,7){$4$};
\node(52)at  (7,6) {$4$} ;
\node(62)at  (8,5) {$4$} ;
 \draw[inv](11)--(21);
\draw[barvr](12.south west)--(12.north east);
\node at (12,10){$X=Y^{(0)}$};
\node at (12,8){$i_1=6$};
\node at (12,7){$m=7$};
\end{scope}
\begin{scope}[yshift=-8cm]
\node (11)at (1,10){$1$};\node (21)at (2,9) {$1$} ;\node  (31) at (3,8){$1$} ;
\node(41)at  (4,7) {$2$} ;\node (51)at  (5,6) {$3$} ;
\node (61)at  (6,5) {$3$} ;
\node (71)at  (7,4) {$3$} ;
\node (12)at  (3,10) {$1$} ;
\node (22)at  (4,9) {$2$} ;
\node (32)at (5,8){$4$};
\node (42)at (6,7){$4$};
\node(52)at  (7,6) {$4$} ;
\node(62)at  (8,5) {$4$} ;

 \draw[inv](21)--(31);
\draw[barvr](22.south west)--(22.north east);
\node at (12,10){$Y^{(2)}$};
\node at (12,8){$i_2=5$};
\node at (12,7){$m=6$};
\end{scope}
 \end{tikzpicture}

\begin{tikzpicture}[scale=.6]
 \tikzstyle{barr}= [opacity=.2,line width=6 mm,cap=round,color=red]
\tikzstyle{barb}= [opacity=.2,line width=6 mm,cap=round,color=blue]
\tikzstyle{barv}= [opacity=.2,line width=6 mm,cap=round,color=violet]
\tikzstyle{barvr}= [opacity=.2,line width=6 mm,cap=round,color=vertex]
\tikzstyle{comp}= [color=vertex]
\tikzstyle{ntrap}=[color=red]
\tikzstyle{inv}=[color=vertex,line width=0.3mm]
\begin{scope}
\node (11)at (1,10){$1$};\node (21)at (2,9) {$1$} ;\node  (31) at (3,8){$1$} ;
\node(41)at  (4,7) {$2$} ;\node(51)at  (5,6) {$3$} ;
\node (61)at  (6,5) {$3$} ;
\node (71)at  (7,4) {$3$} ;
\node (12)at  (3,10) {$1$} ;
\node (22)at  (4,9) {$1$} ;
\node (32)at (5,8){$4$};
\node (42)at (6,7){$4$};
\node(52)at  (7,6) {$4$} ;
\node(62)at  (8,5) {$4$} ;

 \draw[inv](51)--(61);
\draw[barvr](52.south west)--(52.north east);
\node at (12,10){$Y^{(2)}$};
\node at (12,8){$i_3=2$};
\node at (12,7){$m=5$};
\end{scope}
\begin{scope}[yshift=-8cm]
\node (11)at (1,10){$1$};\node (21)at (2,9) {$1$} ;\node  (31) at (3,8){$1$} ;
\node(41)at  (4,7) {$1$} ;\node (51)at  (5,6) {$2$} ;
\node (61)at  (6,5) {$3$} ;
\node (71)at  (7,4) {$3$} ;
\node (12)at  (3,10) {$1$} ;
\node (22)at  (4,9) {$1$} ;
\node (32)at (5,8){$3$};
\node (42)at (6,7){$3$};
\node(52)at  (7,6) {$3$} ;
\node(62)at  (8,5) {$4$} ;

 \draw[inv](61)--(71);
\draw[barvr](62.south west)--(62.north east);

\node at (12,10){$Y^{(3)}$};
\node at (12,8){$i_4=1$};
\node at (12,7){$m=2$};
\end{scope}
\begin{scope}[yshift=-16cm]
\node (11)at (1,10){$1$};\node (21)at (2,9) {$1$} ;\node  (31) at (3,8){$1$} ;
\node(41)at  (4,7) {$1$} ;\node (51)at  (5,6) {$2$} ;
\node (61)at  (6,5) {$3$} ;
\node (71)at  (7,4) {$3$} ;
\node (12)at  (3,10) {$1$} ;
\node (22)at  (4,9) {$1$} ;
\node (32)at (5,8){$3$};
\node (42)at (6,7){$3$};
\node(52)at  (7,6) {$3$} ;
\node(62)at  (8,5) {$3$} ;

\node at (12,7){$Y^{(4)}=Y$};
\end{scope}
\end{tikzpicture}

\subsection{Rectangles}
\begin{definition}
For~$(n,k,l)$ satisfying~$k+l\leqslant n+1$, a~$(n,k,l)$ 
Gog rectangle is an array of positive 
integers~$X=(x_{i,j})_{n\geqslant i\geqslant j\geqslant 1; k\geqslant j;j+l\geqslant i+1}$
formed from the intersection of the~$k$ leftmost NW-SE diagonals 
and the~$l$  rightmost SW-NE  diagonals of a Gog triangle of size $n$.
\end{definition}

\begin{definition}
For~$(n,k,l)$ satisfying~$k+l\leqslant n+1$, a~$(n,k,l)$ GOGAm 
rectangle is  an array of positive 
integers~$X=(x_{i,j})_{n\geqslant i\geqslant j\geqslant 1; k\geqslant j;j+l\geqslant i+1}$
formed from the intersection of the~$k$ leftmost NW-SE diagonals 
and the~$l$  rightmost SW-NE  diagonals of a GOGAm triangle of size~$n$.
\end{definition}
Similarly to the case of trapezoids, one can check that 
a~$(n,k,l)$ Gog (resp. GOGAm) rectangle has a canonical (i.e. minimal) 
completion as  a~$(n,k)$ left  trapezoid, or as a~$(n,l)$ right trapezoid, 
and finally as a  triangle of size~$n$.
\begin{conj}
For any~$(n,k,l)$ satisfying~$k+l\leqslant n+1$ the~$(n,k,l)$ 
Gog and GOGAm rectangles are equienumerated.
\end{conj}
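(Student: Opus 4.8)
The plan is to deduce the rectangle statement from the trapezoid statements --- Conjecture~\ref{conjecture::equipotence::gog::GOGAm::nk::gauche} together with its right-hand analogue proved in~\cite{BC} --- by showing that the bijections between trapezoids are \emph{local} enough to descend to rectangles. Fix $(n,k,l)$ with $k+l\leqslant n+1$ and let $R$ be the set of cells $(i,j)$ with $j\leqslant k$ and $i-j\leqslant l-1$ indexing a $(n,k,l)$ rectangle. First I would record the facts announced in the text: every $(n,k,l)$ Gog (resp.\ GOGAm) rectangle $r$ has a canonical completion $c(r)$ to a $(n,k)$ left Gog (resp.\ GOGAm) trapezoid, and $c(r)$ is dominated by --- in particular agrees on $R$ with --- every left trapezoid extending $r$. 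For Gog this is the lattice infimum; for GOGAm it follows exactly as in Proposition~\ref{comp}, the added entries being constant on SW--NE diagonals.

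Let $\Phi$ be the bijection from $(n,k)$ left Gog trapezoids to $(n,k)$ left GOGAm trapezoids and $\Phi^{-1}$ its inverse; for $k\leqslant 2$ these are the explicit algorithms of Section~\ref{sec::bijection::gog::gogam::left::trap}. The crux is the \textbf{locality property}: if $X,X'$ are $(n,k)$ left Gog trapezoids with $X|_R=X'|_R$, then $\Phi(X)|_R=\Phi(X')|_R$, and symmetrically for $\Phi^{-1}$ on GOGAm trapezoids. Granting it, set $\overline\Phi(r):=\Phi(c(r))|_R$ on Gog rectangles and $\overline\Psi(s):=\Phi^{-1}(c(s))|_R$ on GOGAm rectangles; both are well defined. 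Since $c(\Phi(c(r))|_R)$ and $\Phi(c(r))$ agree on $R$, locality of $\Phi^{-1}$ gives
\[
\overline\Psi\bigl(\overline\Phi(r)\bigr)=\Phi^{-1}\!\bigl(c(\Phi(c(r))|_R)\bigr)\big|_R=\Phi^{-1}\!\bigl(\Phi(c(r))\bigr)\big|_R=c(r)|_R=r,
\]
and symmetrically $\overline\Phi\circ\overline\Psi=\mathrm{id}$, so $\overline\Phi$ is the desired bijection.

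It then remains to verify locality for the algorithms at hand. For the left $(n,2)$ algorithm one traces its action on the cells of $R$, namely $X_{i,1}$ for $i\leqslant l$ and $X_{i,2}$ for $i\leqslant l+1$. Scanning the inversions $i_1>i_2>\cdots$ of the first column, an inversion $i_t\geqslant l+1$ modifies only rows $\geqslant i_t+1\geqslant l+2$ and so leaves $R$ untouched; an inversion $i_t\leqslant l$ lies in $R$, and its effect --- shifting the first column up on an initial segment and decrementing the second column on a run terminating either at the last repetition within $R$ of the value $Y^{(t-1)}_{i_t+1,2}$ or at the SW--NE edge of $R$ (row $i=l+1$ in column $2$) --- is governed by that value and by whether it persists down to row $l+1$, which is data visible inside $R$: all rows of $R$ above the current inversion still carry their original $X$-values, and each decrement of a cell of $R$ is itself dictated by cells of $R$. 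Running this analysis in reverse handles $\Phi^{-1}$; for $l\leqslant 2$ one argues identically with the right-trapezoid bijection of~\cite{BC}, which modifies only a bounded band of SW--NE diagonals. Hence the rectangle statement holds unconditionally whenever $k\leqslant 2$ or $l\leqslant 2$.

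The obstacle is twofold. For general $(n,k,l)$ this is only a reduction: it presupposes a bijection between $(n,k)$ left Gog and GOGAm trapezoids, still open for $k\geqslant 3$, \emph{and} that such a bijection enjoys the locality property, which a future construction need not satisfy. Even within the verified range the delicate point is that the internal parameters of the algorithms (the indices $m$ and $p$) may fall outside $R$ yet still affect cells inside $R$; the verification rests on the observation that what they affect is always a full initial segment of a column reaching the boundary of $R$, so the influence is recorded by a single ``persistence to the boundary'' condition readable off $X|_R$. A wholly different route would be to find refined enumerations --- counting left Gog and GOGAm trapezoids by their trace on $R$ --- and prove these equal directly, but no such formula is presently known.
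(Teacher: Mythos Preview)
The statement is a \emph{conjecture} in the paper; there is no proof to compare against. The paper merely asserts, without argument, that the bijections of~\cite{BC} and of Section~\ref{sec::bijection::gog::gogam::left::trap} ``restrict to bijections for rectangles of size~$(n,k,2)$ or~$(n,2,l)$'', using canonical completions. Your proposal is therefore not a proof of the conjecture either --- as you correctly acknowledge --- but an attempt to justify this partial claim.

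However, your key lemma, the \textbf{locality property}, is false as stated. Take $n=4$, $k=2$, $l=2$, so $R=\{(1,1),(2,1),(2,2),(3,2)\}$. Consider the two $(4,2)$ left Gog trapezoids
\[
X:\ (X_{4,1},X_{3,1},X_{2,1},X_{1,1})=(1,1,2,2),\ (X_{4,2},X_{3,2},X_{2,2})=(2,3,3),
\]
\[
X':\ (1,2,2,2),\ (2,3,3).
\]
They agree on $R$. Running the algorithm gives $\Phi(X)$ with first column $(1,1,1,2)$ and second column $(1,2,2)$, while $\Phi(X')$ has first column $(1,2,2,2)$ and second column $(2,2,2)$. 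Hence $\Phi(X)_{2,1}=1\neq 2=\Phi(X')_{2,1}$, and both $(2,1)\in R$. The failure occurs exactly at the boundary case you gloss over: an inversion at $i_t=l$ (here $l=2$) depends on $X_{l+1,1}\notin R$ yet modifies $X_{l+1,2}\in R$, and an inversion at $i_t<l$ can shift into row $l$ the value $X_{l+1,1}\notin R$.

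What the paper's terse remark presumably intends --- and what your example $X=c(r)$ above does satisfy --- is the weaker statement that $\Phi$ carries the canonical completion of a Gog rectangle to the canonical completion of a GOGAm rectangle (and likewise $\Phi^{-1}$). In the example, $\Phi(c(r))$ is indeed the minimal GOGAm trapezoid extending $\Phi(c(r))|_R$. This ``$\Phi\circ c = c\circ(\Phi|_R)$'' commutation immediately yields bijectivity without any locality hypothesis, and is what you should try to prove instead: track how the algorithm acts on a canonical completion (where the sub-$R$ part is as small as possible), rather than on an arbitrary trapezoid extending the rectangle.
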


As in the case of trapezoids, there is also a refined version 
of the conjecture with the statistic~$X_{11}$ preserved.

One can check, using standard completions,  that our bijections, 
in~\cite{BC} and in the present paper, restrict to bijections for 
rectangles of size~$(n,k,2)$ or~$(n,2,l)$. Furthermore, 
in the case of~$(n,2,2)$ rectangles, the bijections coming from left 
and right trapezoids are the same.

\acknowledgements
\label{sec:ack}
This work is based on computer exploration and the authors use the open-source 
mathematical software Sage~\cite{sage} and one of its 
extention, Sage-Combinat~\cite{sagecomb}

\bibliographystyle{abbrvnat}
\bibliography{fpsacbib}
\label{sec:biblio}

\end{document}